\documentclass{article}

\usepackage{amsmath, amsthm, amssymb, amsfonts}
\usepackage{dsfont}
\usepackage{float}
\usepackage{algorithm}
\usepackage{algpseudocode}
\usepackage{mathrsfs}
\usepackage{graphicx}
\usepackage{graphics}
\usepackage[T1]{fontenc}
\usepackage{pstricks}
\usepackage{pst-plot}
\usepackage{pst-all}
\usepackage{pict2e}
\usepackage{parskip}
\usepackage{appendix}
\usepackage{hyperref}
\usepackage{verbatim}

\newtheorem{Theorem}{Theorem}

\newtheorem{Lemma}[Theorem]{Lemma}
\newtheorem{Proposition}[Theorem]{Proposition}
\newtheorem{Remark}[Theorem]{Remark}

\def\1e{\mathds{1}}

\def\Ce{\mathds{C}}

\def\Ne{\mathds{N}}

\def\Re{\mathds{R}}

\def\Te{\mathds{T}}

\def\Ze{\mathds{Z}}

\def\vg0{\mathbf{0}}

\def\ran{\mathop{\mathrm{ran}}}
\def\ker{\mathop{\mathrm{ker}}}

\def\ud{\mathrm{d}}

\def\eqd{: =}

\def\segcc#1#2{[#1, #2]}
\def\segco#1#2{[#1, #2)}
\def\segoc#1#2{(#1, #2]}

\def\vphan{\vphantom{\bigl|}}

\def\scal#1#2{\langle #1,#2 \rangle}

\def\set#1#2{\{\mskip 1mu #1 \mskip 1mu
    | \mskip 1mu #2 \mskip 1mu \}
    }
\def\setc#1#2{
    \left\{
    \mskip 2mu #1 \mskip 2mu
    \left| \vphan\vphantom{#1#2} \right.
    \mskip 2mu #2 \mskip 2mu
    \right\}
    }

\def\mod#1{|\mskip 1mu #1 \mskip 1mu|}

\def\norm#1{\| \mskip 1mu #1 \mskip 1mu \|}
\def\normc#1{
    \left\|
    \mskip 2mu #1 \vphan \mskip 2mu
    \right\|
    }

\def\ran#1{\mathop{\mathrm{ran}}#1}

\def\supp#1{\mathop{\mathrm{supp}}#1}

\title{A variational mollification approach to circular deconvolution}
\author{Mirza Karamehmedovi\'c, Pierre Mar\'echal, and Mykhailo Petrov}
\date{}

\begin{document}

\maketitle

\begin{abstract}
We propose a variational mollification approach to circular deconvolution based on the reconstruction of a mollified target object rather than the exact solution itself. The method is formulated as a convex variational problem whose solution admits an explicit Fourier representation. We establish the consistency of the proposed reconstruction as the target resolution increases and derive convergence rates for deterministic data perturbations.
For ordinary smooth kernels, the method achieves the classical order-optimal algebraic convergence rates under Besov-Nikolskii smoothness assumptions, while for supersmooth kernels it attains the corresponding order-optimal logarithmic rates. Numerical experiments on synthetic and wind-direction data illustrate the effectiveness of the proposed approach and confirm the theoretical predictions.
\end{abstract}

\section{Introduction}

Inverse problems arise in a wide variety of scientific disciplines,
including signal and image processing, optics, medical imaging,
geophysics, astronomy and inverse scattering.
Their common objective is to reconstruct an unknown object from indirect
measurements related through an operator equation.
Whenever the inverse operator fails to be continuous,
small perturbations of the data may produce arbitrarily large errors in
the reconstruction, making regularization an essential ingredient of any
reliable inversion procedure.
The mathematical foundations of regularization theory are now classical
and have been extensively developed since the pioneering works of
Tikhonov, Morozov and many others
\cite{tikhonov1977,morozov1984,engl1996,kirsch2011,scherzer2009}.

Classical regularization theory is primarily concerned with recovering
the exact unknown while controlling the instability inherent to the
inverse problem.
In many practical situations, however, the available measurements do not
contain enough information to justify such an objective.
Because only a limited range of frequencies can generally be reconstructed
reliably, it is often more natural to seek a stable approximation of the
unknown whose resolution is compatible with the information carried by
the data.

This line of thought can already be traced back to the pioneering work of
Lannes and collaborators \cite{lannes1987stabilized} on stabilized
Fourier synthesis.
Their starting point is the observation that spectral interpolation
remains stable whereas spectral extrapolation is intrinsically
ill-posed.
Rather than attempting an unstable reconstruction of the complete
spectrum, they advocate limiting the resolution of the reconstructed
object to what is supported by the available frequency information.
Although the notion of a target object is not formulated explicitly in
modern mathematical terms, it is already present through the concept of a
physically meaningful object representation determined by the chosen
resolution limit.

Within inverse problems, this philosophy was subsequently placed on a
rigorous mathematical footing through the theory of approximate inverses
developed by Louis and Maass
\cite{louis1989,louis1990}.
Instead of attempting to reconstruct the exact solution itself, they
prescribe a family of mollified target objects and construct
reconstruction operators converging towards these objects as the
resolution parameter tends to zero.
This approach provides a natural compromise between stability,
resolution and interpretability.

Motivated by this philosophy, the concept of variational mollification
was introduced in \cite{alibaud2009variational} for the inversion of
truncated Fourier operators.
Rather than regarding the resolution parameter as a purely numerical
regularization parameter, it is interpreted as defining a family of
reconstruction targets connected through a genuine asymptotic theory.
More precisely, it was shown that the variationally reconstructed target
converges towards the exact object as the resolution parameter tends to
zero, thereby establishing the consistency of the approach.

More recently, this framework was extended to probability density
deconvolution in \cite{hohage2024mollifier}, where deterministic
convergence rates were established under Besov-Nikolskii source
conditions.
This demonstrated that variational mollification not only provides an
interpretable reconstruction target but also achieves quantitative
regularization guarantees comparable with those of classical
regularization methods.

The present work continues this line of research in the setting of
circular deconvolution.
Circular convolution naturally arises in numerous applications involving
periodic or angular data, including directional statistics,
orientation estimation, phase reconstruction and harmonic analysis on the
circle.
It also constitutes a natural model problem for Fourier synthesis and
spectral reconstruction.
Several approaches to circular deconvolution have been proposed in the
statistical literature, including adaptive spectral cut-off and model
selection methods
\cite{van1991regularized,johannes2013adaptive,Johannes2021}.
Related questions also arise in Fourier synthesis and super-resolution,
where one seeks to recover functions from incomplete frequency
measurements
\cite{isaev2020holder,isaev2022reconstruction,isaev2023super}.

The objective of the present paper is twofold.
First, we derive a variational reconstruction formula built around a
prescribed mollified target object, thereby retaining a direct
resolution-based interpretation of the reconstruction.
Second, we provide a complete deterministic convergence analysis of this
reconstruction.
Under ordinary smooth assumptions on the convolution kernel, we prove
order-optimal algebraic convergence rates in Besov-Nikolskii spaces.
Under supersmooth assumptions, we recover the classical logarithmic
convergence rates characteristic of exponentially ill-posed inverse
problems.
These results show that the variational mollification approach achieves
the same asymptotic performance as classical regularization methods while
preserving a direct interpretation of the reconstructed object.

Beyond circular deconvolution itself, the underlying philosophy advocated
in this paper is that the choice of the target object should precede the
construction of the inversion algorithm.
From this perspective, convergence analysis becomes the second step of
the theory: once an interpretable reconstruction target has been defined,
one may investigate consistency, stability and convergence rates.
We believe that this viewpoint provides a natural bridge between the
physical notion of finite resolution and the mathematical theory of
regularization.

The paper develops this program progressively.
Section~\ref{sec:cuc} introduces the prescribed mollified target object
and derives the associated variational reconstruction formula.
Section~\ref{sec:duc} investigates the asymptotic behavior of this
reconstruction, establishing consistency and order-optimal convergence
rates in both the ordinary smooth and supersmooth settings.
Section~\ref{sec:num} concludes with numerical experiments illustrating the theoretical findings and the practical performance of the proposed approach.

\section{Convolution on the unit circle}
\label{sec:cuc}

To fix the notation, and for convenience,
we start with some basic facts on function
spaces defined on circles. Let $\Te=\Re/2\pi\Ze$
denote the unit circle. Throughout the paper,
integrals on $\Te$ are taken with respect
to the normalized Haar measure
\[
\ud\sigma(t)=\frac{\ud t}{2\pi}.
\]
We denote by $L^p(\Te)$ the
space of Borel measurable
functions on~$\Te$ such that
\[
\int_\Te\mod{f(t)}^p\,\ud\sigma(t)<\infty.
\]
For $f\in L^1(\mathbb T)$, we define
\[
\widehat f(k)=\int_{\mathbb T}
f(t)e^{-ikt}\,\ud\sigma(t),
\quad k\in\Ze.
\]
In particular,
\[
\widehat f(0)=
\int_{\mathbb T} f(t)\,\ud\sigma(t).
\]
The Riemann-Lebesgue Lemma
implies that $(\widehat{f}(k))$
belongs to $c_\circ(\Ze)$,
the space of complex sequences
which vanish at infinity.
Since $\Te$ is compact,
$q\geq p$ implies
$L^q(\Te)\subset L^p(\Te)$.
In particular,
$L^2(\Te)\subset L^1(\Te)$.
The space $L^2(\Te)$ is
endowed with the inner product
\[
\scal{f}{g}=\int_\Te
f(t)\overline{g(t)}\,\ud\sigma(t)
\]
which turns $L^2(\Te)$ into a
Hilbert space.
The norm corresponding to
$\scal{\cdot}{\cdot}$ is
denoted by $\norm{\cdot}$.
The family $(e_k)_{k\in\Ze}$,
where $e_k(t)\eqd e^{ikt}$,
is an orthonormal basis of $L^2(\Te)$.
Bessel's inequality reads
\[
\norm{f}^2\geq
\sum_{k=-n}^n\mod{\widehat{f}(k)}^2,
\quad n\in\Ne.
\]
If $f\in L^2(\Te)$,
then the sequence $(\widehat{f}(k))$ of
its Fourier coefficients belongs to
\[
l_\Ce^2(\Ze)\eqd
\setc{u=(u_k)\in\Ce^\Ze}
{\sum_{k\in\Ze}\mod{u_k}^2<\infty},
\]
and Parseval's identity reads
\[
\norm{f}^2
=
\sum_{k\in\mathbb Z}
\mod{\widehat f(k)}^2.
\]
The space $l_\Ce^2(\Ze)$ is
endowed with the inner product
$$
\scal{u}{v}\eqd
\sum_{k\in\Ze} u_k \overline{v_k},
$$
which turns $l_\Ce^2(\Ze)$
into a Hilbert space.
We denote by $F$ the linear
mapping from $L^2(\Te)$ to
$l_\Ce^2(\Ze)$ which maps square
integrable functions
to their sequences of Fourier
coefficients.

We now move on to describe
convolution on the unit circle. Recall that, if
$\gamma\in L^1(\Te)$
and $f\in L^p(\Te)$ with
$p\in\segcc{1}{\infty}$, then the
function 
$s\mapsto\gamma(t-s)f(s)$ is
integrable for almost all
values of~$t$ and that
the function defined almost
everywhere on~$\Te$ by
$$
\big(\gamma\circledast f\big)(t)
\eqd
\int_\Te \gamma(t-s)f(s)\,\ud\sigma(s)
$$
belongs to $L^p(\Te)$ and
satisfies the inequality
$$
\normc{\gamma\circledast f}_p
\leq
\normc{\gamma}_1\normc{f}_p.
$$
In this paper, we are mostly
interested in the case
where $p=2$. The linear mapping
$T\colon f\mapsto Tf\eqd\gamma\circledast f$
then maps $L^2(\Te)$ into 
$L^2(\Te)$.

The convolution operator is diagonalized
by the Fourier transform, as summarized
by the commutative diagram
\begin{figure}[h]
\begin{picture}(150,100)(6,20)
\put(110,100){$L^2(\Te)$}
\put(205,100){$L^2(\Te)$}
\put(112,35){$l_\Ce^2(\Ze)$}
\put(207,35){$l_\Ce^2(\Ze)$}
\put(145,100){\vector(1,0){53}}
\put(145,35){\vector(1,0){53}}
\put(125,90){\vector(0,-1){40}}
\put(220,90){\vector(0,-1){40}}
\put(167,105){$T$}
\put(167,20){$\Gamma$}
\put(110,67){$F$}
\put(226,67){$F$}
\end{picture}
\end{figure}

Here,
$\Gamma\colon l_\Ce^2(\Ze)\to l_\Ce^2(\Ze)$ 
is the operator of elementwise
multiplication by 
$(\widehat\gamma(k))$:
if $s\in l_\Ce^2(\Ze)$, 
$$
\Gamma s\eqd \big(\widehat\gamma(k)\cdot s_k\big)_{k\in\Ze}.
$$
Recall indeed that
the $k$-th Fourier coefficient
of $\gamma\circledast f$ is
equal to
$\widehat\gamma(k)\cdot \widehat f(k)$
for every $k\in\Ze$.
It is easy to see that the
adjoint of~$\Gamma$ is the
operator of elementwise
multiplication by
$\big(\overline{\widehat\gamma(k)}\big)$.
The following theorem follows immediately from the $L^2$-convergence
of Fourier series.

\begin{Theorem}
The adjoint
$F^*\colon l_\Ce^2(\Ze)\to L^2(\Te)$ of~$F$ is given by
$$
F^\ast s(t)=\sum_{k\in\Ze}s_k e^{ikt}
$$
and satisfies $F^\ast=F^{-1}$,
so that~$F$ is unitary.
\end{Theorem}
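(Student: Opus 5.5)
We need to show that the adjoint is the synthesis map, and that $F$ is unitary. The plan has three steps: define the synthesis operator, check that it is the adjoint by a direct inner-product computation, and then use Parseval and completeness to identify it with the inverse.

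\textbf{Step 1: the synthesis operator is well defined.} For $s\in l_\Ce^2(\Ze)$, the partial sums $S_n s\eqd\sum_{\mod{k}\le n}s_k e_k$ form a Cauchy sequence in $L^2(\Te)$. This follows from orthonormality of $(e_k)$, since for $m<n$
$$
\norm{S_n s-S_m s}^2=\sum_{m<\mod{k}\le n}\mod{s_k}^2 .
$$
So the series $\sum_k s_k e_k$ converges in $L^2(\Te)$ to a limit $Gs$, with $\norm{Gs}^2=\sum_k\mod{s_k}^2$. The map $G$ is therefore a linear isometry, and it is the operator $s\mapsto\sum_k s_ke^{ikt}$ of the statement, with the series understood in the $L^2$ sense.

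\textbf{Step 2: $G=F^*$.} For $f\in L^2(\Te)$ and $s\in l_\Ce^2(\Ze)$, continuity of the inner product and the definition of $\widehat f(k)$ give
$$
\scal{f}{Gs}=\lim_{n}\sum_{\mod{k}\le n}\overline{s_k}\int_\Te f(t)e^{-ikt}\,\ud\sigma(t)=\sum_k\widehat f(k)\overline{s_k}=\scal{Ff}{s}_{l^2}.
$$
The last series converges absolutely by Cauchy--Schwarz, using Bessel's inequality to know that $(\widehat f(k))\in l^2$. This identity is exactly the definition of the adjoint, so $F^*=G$.

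\textbf{Step 3: $F^*=F^{-1}$.} First, $FG=\mathrm{id}$. The $j$-th coefficient of $Gs$ is $\scal{Gs}{e_j}$. By continuity this equals $\lim_n\scal{S_ns}{e_j}=s_j$. Second, $GF=\mathrm{id}$. Here $GFf=\sum_k\widehat f(k)e_k$, and this equals $f$ by the $L^2$-convergence of Fourier series. Equivalently, Parseval's identity shows that $\norm{f-S_nFf}^2=\norm{f}^2-\sum_{\mod{k}\le n}\mod{\widehat f(k)}^2\to0$. Together, $F$ is a bijective isometry whose inverse is $F^*$, that is, $F$ is unitary.

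The only non-formal input is completeness of $(e_k)$, i.e.\ Parseval's identity. The paper assumes this, since it states that $(e_k)$ is an orthonormal basis. Everything else is routine Hilbert-space bookkeeping.
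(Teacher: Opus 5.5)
Your proposal is correct and takes essentially the same approach as the paper. The paper gives no argument beyond stating that the result follows immediately from the $L^2$-convergence of Fourier series; your Steps 1--2 supply the routine Hilbert-space bookkeeping it leaves implicit, and your Step 3 rests on exactly that completeness input.
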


As can be easily checked, for every $k\in\mathbb Z$,
\[
Te_k=\widehat\gamma(k)e_k.
\]
Thus, the functions \(e_k\), \(k\in\mathbb Z\), form an eigenbasis of \(T\),
with corresponding eigenvalues \(\widehat\gamma(k)\).
Consequently, \(T\) is injective
if and only if
\[
\widehat\gamma(k)\neq0,
\quad k\in\mathbb Z.
\]
More generally, the
kernel of~$T$ is given by
$$
\ker{T}=
\setc{f\in L^2(\Te)}
{\forall k\in\supp{\widehat{\gamma}},
\, \widehat{f}(k)=0}.
$$
where $\supp\widehat{\gamma}$
denotes the support of the
sequence $\widehat{\gamma}$, that is,
the set of indices~$k$ such
that $\widehat{\gamma}(k)\neq 0$.

\section{Deconvolution on the unit circle}
\label{sec:duc}

The ill-posedness of the deconvolution
problem can be characterized in a simple
way through the {\sl reduced minimum
modulus} of the operator~$T$. We define
\[
m(T):=\inf
\setc{\norm{Tf}}
{f\in(\ker{T})^\perp,\;\norm{f}=1}.
\]
Since~$T$ is diagonal in the Fourier
basis, 
$FTf(k)=\widehat\gamma(k)\,\widehat f(k)$,
so that, by Parseval's identity,
\[
m(T)=\inf
\setc{\mod{\widehat\gamma(k)}}
{\widehat\gamma(k)\neq 0}.
\]
Hence, if $\widehat\gamma$ has infinite support and
$|\widehat\gamma(k)|$ tends to zero along its support as
$|k|\to\infty$, then
\[
m(T)=0.
\]
By the Riemann-Lebesgue lemma, this is in particular the case
whenever~$\gamma$ belongs to $L^1(\Te)$ and $\widehat\gamma$ has infinite support.
Equivalently, the range of~$T$ is
not closed and the inverse, 
defined on $\ran{T}$, is not continuous.
This shows that the periodic deconvolution
problem is ill-posed in the sense of Hadamard.
This observation naturally motivates
the variational mollification approach
considered in this work.

\subsection{Mollification}

In this section, we focus on regularization
using approximate identities.
We show that the stable reconstruction
of a prescribed mollified target object is
possible.

An {\sl approximate identity}
on~$\Te$ is a family of functions
$(\varphi_\beta)_{\beta\in\segoc{0}{1}}$
in $L^1(\Te)$ that satisfies
\begin{enumerate}
\item[(a)]
for every $\beta\in\segoc{0}{1}$,
$\varphi_\beta\in L^1(\Te)$ and
$\int_\Te \varphi_\beta(t)\,\ud\sigma(t)=1$;
\item[(b)]
$(\varphi_\beta)$ is uniformly
bounded in the $L^1$-norm
by some positive constant~$C_1$;
\item[(c)]
for every $\delta\in(0,\pi)$,
\(
\int_{\mathbb T}
|\varphi_\beta(t)|\,
\1e_{(-\delta,\delta)^c}(t)\,\ud\sigma(t)
\rightarrow0
\)
as \(\beta\to0\), in which
$\1e_S(t)$ denotes, as usual,
the indicator function of the
set~$S$, meaning that
$\1e_S(t)$ is equal to~1 if
$t\in S$ and to~0 otherwise.
\end{enumerate}

The parameter~$\beta$ is
sometimes replaced by an
index~$n$ that goes to infinity,
yielding approximate unities
in the form of sequences.
The Fejér kernels are well-known examples of such approximate unities.

It is well-known that, if
$(\varphi_\beta)$ is an approximate
unity and $f\in L^p(\Te)$ with
$p\in\segco{1}{\infty}$, then
$\varphi_\beta\circledast f$ approaches~$f$
in $L^p$-norm as $\beta\downarrow 0$.
It is therefore natural to aim for
the reconstruction
of $\varphi_\beta\circledast f$
rather than~$f$. We shall see that
we obtain a regularization method
for the deconvolution problem.

Let $C_\beta$ denote the operator
of convolution by $\varphi_\beta$, which represents the prescribed reconstruction target at resolution~$\beta$.
The Morozov Completion
Condition between~$T$ and
$Q_\beta:=I-C_\beta$,
where $I$ denotes the identity of
$L^2(\Te)$,
asserts the existence
of a positive constant~$c_\beta$
such that
\begin{equation}
\label{eq:morozov}  
\forall f\in L^2(\Te),\quad
\normc{Tf}^2+
\normc{(I-C_\beta)f}^2
\geq
c_\beta\normc{f}^2.
\end{equation}
If the Morozov Completion Condition
between~$T$ and
$Q_\beta:=I-C_\beta$
is satisfied, then
the operator
$\big(T^\ast T+(I-C_\beta)^\ast
(I-C_\beta)\big)$ is boundedly
invertible, and the functional
$$
f\mapsto\frac{1}{2}
\normc{C_\beta g-Tf}^2+
\frac{1}{2}
\normc{(I-C_\beta)f}^2
$$
has a unique minimizer
$$
f_\beta=
\big(
T^\ast T+(I-C_\beta)^\ast
(I-C_\beta)\big)^{-1}T^\ast
C_\beta g
$$
which depends continuously on~$g$.
Expressed in terms of Fourier
coefficients, the solution
reads:
\begin{equation}
\label{eq:solution-fourier}
\widehat{f}_\beta(k)=
\frac{\overline{\widehat{\gamma}(k)}
\,\widehat\varphi_\beta(k)}
{\mod{\widehat{\gamma}(k)}^2+
\mod{1-\widehat\varphi_\beta(k)}^2}
\widehat{g}(k).
\end{equation}

\begin{Proposition}
\label{pro-morozov}
Assume that $\widehat\gamma(0)\neq0$.
If $\varphi_\beta$ is nonnegative and even, then the Morozov
Completion Condition between $T$ and $Q_\beta:=I-C_\beta$ is satisfied.
\end{Proposition}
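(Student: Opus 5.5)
The plan is to reduce the Morozov Completion Condition \eqref{eq:morozov} to a statement about Fourier coefficients, as was done for $m(T)$. Since $T$ and $C_\beta$ are both diagonal in the basis $(e_k)$, with eigenvalues $\widehat\gamma(k)$ and $\widehat\varphi_\beta(k)$, Parseval's identity gives
\[
\normc{Tf}^2+\normc{(I-C_\beta)f}^2
=\sum_{k\in\Ze}\Big(\mod{\widehat\gamma(k)}^2+\mod{1-\widehat\varphi_\beta(k)}^2\Big)\mod{\widehat f(k)}^2 .
\]
Hence \eqref{eq:morozov} holds with
\[
c_\beta:=\inf_{k\in\Ze}\Big(\mod{\widehat\gamma(k)}^2+\mod{1-\widehat\varphi_\beta(k)}^2\Big),
\]
and it only remains to show that this infimum is positive for each fixed $\beta$.

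The first step is to exploit the assumptions on $\varphi_\beta$. Because $\varphi_\beta$ is even and real, $\widehat\varphi_\beta(k)=\int_\Te\varphi_\beta(t)\cos(kt)\,\ud\sigma(t)$ is real. Using $\int_\Te\varphi_\beta\,\ud\sigma=1$, we can write
\[
1-\widehat\varphi_\beta(k)=\int_\Te\varphi_\beta(t)\big(1-\cos(kt)\big)\,\ud\sigma(t)\geq 0,
\]
since $\varphi_\beta\geq0$.

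The second step is to show strict positivity for $k\neq0$. If $1-\widehat\varphi_\beta(k)=0$, then the nonnegative integrand $\varphi_\beta(1-\cos kt)$ vanishes almost everywhere. For $k\neq0$ the zero set of $1-\cos kt$ on $\Te$ is finite, so $\varphi_\beta=0$ a.e. This contradicts $\int\varphi_\beta\,\ud\sigma=1$. For $k=0$ we have $1-\widehat\varphi_\beta(0)=0$, but then the hypothesis $\widehat\gamma(0)\neq0$ keeps the $k=0$ term positive. Thus every term in the infimum is strictly positive.

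The main obstacle is passing from termwise positivity to a positive infimum over the infinitely many $k$. Here I would use the Riemann–Lebesgue lemma for the fixed function $\varphi_\beta\in L^1(\Te)$: $\widehat\varphi_\beta(k)\to0$ as $\mod{k}\to\infty$. So there is $K$ (depending on $\beta$) such that $\mod{1-\widehat\varphi_\beta(k)}\geq1/2$ for $\mod{k}>K$. Over the remaining finitely many indices $\mod{k}\leq K$, the minimum of strictly positive numbers is positive. Therefore $c_\beta\geq\min\{1/4,\min_{\mod{k}\leq K}(\mod{\widehat\gamma(k)}^2+\mod{1-\widehat\varphi_\beta(k)}^2)\}>0$, which completes the argument.
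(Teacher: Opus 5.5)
Your proof is correct and follows essentially the same route as the paper. Parseval reduces the condition to positivity of $\inf_{k}\bigl(|\widehat\gamma(k)|^2+|1-\widehat\varphi_\beta(k)|^2\bigr)$, and the Riemann--Lebesgue lemma handles large $|k|$; on the finitely many remaining indices, positivity comes from $\widehat\gamma(0)\neq0$ at $k=0$ and, for $k\neq0$, from $\widehat\varphi_\beta(k)<1$, which follows from nonnegativity of $\varphi_\beta$ and the finiteness of the zero set of $1-\cos(kt)$.
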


\begin{proof}
Suppose that $\varphi_\beta$ is nonnegative
and even. Then
$\widehat \varphi_\beta(k)\in\Re$ and
\[
\widehat\varphi_\beta(k)\le
\widehat\varphi_\beta(0)=1,
\quad k\in\Ze.
\]
By unitarity of the Fourier transform,
for every $f\in L^2(\Te)$,
\[
\norm{Tf}^2+
\norm{(I-C_\beta)f}^2=
\sum_{k\in\mathbb Z}
\left(\mod{\widehat\gamma(k)}^2+
\mod{1-\widehat{\varphi_\beta}(k)}^2\right)
\mod{\widehat f(k)}^2.
\]
Hence
\[
\norm{Tf}^2+
\norm{(I-C_\beta)f}^2
\ge
\left(
\inf_{k\in\mathbb Z}
\bigl(\mod{\widehat\gamma(k)}^2+
\mod{1-\widehat{\varphi_\beta}(k)}^2\bigr)
\right)
\norm{f}^2.
\]
It remains to show that this infimum
is strictly positive.
Since $\varphi_\beta\in L^1(\mathbb T)$,
the Riemann-Lebesgue lemma implies
\[
\widehat{\varphi_\beta}(k)\to 0
\quad\text{as}\quad
\mod{k}\to\infty,
\]
hence
\[
\mod{1-\widehat{\varphi_\beta}(k)}\to 1.
\]
Therefore there exists $K\in\Ne$ such that
\[
\mod{1-\widehat{\varphi_\beta}(k)}
\ge\frac12
\quad\text{for all}\quad
\mod{k}\ge K.
\]
On the finite set $\{\mod{k}<K\}$,
the quantity
\[
\mod{\widehat\gamma(k)}^2+
\mod{1-\widehat{\varphi_\beta}(k)}^2
\]
is strictly positive.
Indeed, for $k=0$ we have
$\mod{\widehat\gamma(0)}>0$.
If $k\neq 0$, since $\varphi_\beta$ is nonnegative,
even, and normalized by
\[
\int_{\Te}\varphi_\beta\,\ud\sigma=1,
\]
we have
\[
\widehat{\varphi_\beta}(k)
=
\int_{\Te}\varphi_\beta(t)\cos(kt)\,\ud\sigma(t)
<1,
\]
since equality would require $\varphi_\beta$ to be supported,
up to a null set, on the finite set
$\set{t\in\Te}{\cos(kt)=1}$.
Hence
\[
\mod{1-\widehat{\varphi_\beta}(k)}>0.
\]
It follows that
\[
\inf_{k\in\Ze}
\left(
\mod{\widehat\gamma(k)}^2+
\mod{1-\widehat{\varphi_\beta}(k)}^2
\right)
>0,
\]
which proves the result.
\end{proof}

\subsection{Consistency}

The first theoretical result establishes the consistency of the proposed reconstruction.

\begin{Theorem}[Consistency of the mollified reconstruction]
Assume that
\[
g=Tf^\dagger,
\qquad
f^\dagger\in(\ker T)^\perp,
\]
and that, for every $k\in\mathbb Z$,
\[
\widehat{\varphi_\beta}(k)\to1
\qquad\text{as }\beta\to0.
\]
Assume moreover that the family $(\varphi_\beta)_{\beta\in(0,1]}$
is uniformly bounded in $L^1(\mathbb T)$, namely
\[
\sup_{\beta\in(0,1]}
\|\varphi_\beta\|_{L^1(\mathbb T)}<\infty.
\]
Then
\[
f_\beta\to f^\dagger
\qquad\text{in }L^2(\mathbb T)
\]
as $\beta\to0$.
\end{Theorem}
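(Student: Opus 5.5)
The plan is to work entirely on the Fourier side, using the explicit formula \eqref{eq:solution-fourier} together with Parseval's identity, and to conclude by dominated convergence on $\ell^2(\Ze)$.

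\textbf{Step 1: the error multiplier.} Since $g=Tf^\dagger$, we have $\widehat g(k)=\widehat\gamma(k)\widehat{f^\dagger}(k)$. Because $f^\dagger\in(\ker T)^\perp$, the description of $\ker T$ in Section~\ref{sec:cuc} gives $\widehat{f^\dagger}(k)=0$ whenever $\widehat\gamma(k)=0$. So only indices $k\in\supp\widehat\gamma$ contribute. For these indices the denominator $\mod{\widehat\gamma(k)}^2+\mod{1-\widehat\varphi_\beta(k)}^2$ is strictly positive, so \eqref{eq:solution-fourier} makes sense coefficientwise for each such $k$. Inserting $\widehat g$ into \eqref{eq:solution-fourier} gives
\[
\widehat{f}_\beta(k)-\widehat{f^\dagger}(k)=-\,r_\beta(k)\,\widehat{f^\dagger}(k),
\]
with the multiplier
\[
r_\beta(k)\eqd
\frac{\mod{1-\widehat\varphi_\beta(k)}^2+\mod{\widehat\gamma(k)}^2\bigl(1-\widehat\varphi_\beta(k)\bigr)}
{\mod{\widehat\gamma(k)}^2+\mod{1-\widehat\varphi_\beta(k)}^2}.
\]
By Parseval's identity,
\[
\norm{f_\beta-f^\dagger}^2=\sum_{k\in\supp\widehat\gamma}\mod{r_\beta(k)}^2\mod{\widehat{f^\dagger}(k)}^2 .
\]

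\textbf{Step 2: pointwise convergence.} Fix $k\in\supp\widehat\gamma$. The hypothesis $\widehat\varphi_\beta(k)\to1$ makes the numerator of $r_\beta(k)$ tend to $0$. The denominator stays bounded below by $\mod{\widehat\gamma(k)}^2>0$. Hence $r_\beta(k)\to0$ as $\beta\to0$.

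\textbf{Step 3: uniform domination.} We need a bound on $\mod{r_\beta(k)}$ that holds uniformly in $\beta$ and $k$. Here the $L^1$ assumption enters: $\mod{\widehat\varphi_\beta(k)}\le\norm{\varphi_\beta}_{L^1}\le C_1$ for all $k$ and $\beta$. Writing $r_\beta(k)=1-\dfrac{\mod{\widehat\gamma(k)}^2\widehat\varphi_\beta(k)}{\mod{\widehat\gamma(k)}^2+\mod{1-\widehat\varphi_\beta(k)}^2}$, the fraction has modulus at most $\mod{\widehat\varphi_\beta(k)}$, because the ratio $\mod{\widehat\gamma}^2/(\mod{\widehat\gamma}^2+\mod{1-\widehat\varphi_\beta}^2)$ is at most $1$. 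Therefore $\mod{r_\beta(k)}\le 1+C_1$ uniformly.

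Since $\sum_k\mod{\widehat{f^\dagger}(k)}^2=\norm{f^\dagger}^2<\infty$, the dominated convergence theorem applied to the counting measure on $\Ze$, with weights $\mod{\widehat{f^\dagger}(k)}^2$, yields $\norm{f_\beta-f^\dagger}\to0$.

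The main obstacle is Step 3, namely the uniform bound on the multiplier. Without it, the pointwise convergence of Step 2 would not transfer to the infinite sum, and the high frequencies, where $\widehat\varphi_\beta(k)$ is far from $1$ for fixed $\beta$, could not be controlled. The rewriting of $r_\beta$ as ``$1$ minus a fraction of modulus at most $\mod{\widehat\varphi_\beta(k)}$'' is exactly what reduces this to the $L^1$ bound.
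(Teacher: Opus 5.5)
Your proposal is correct and follows the paper's proof essentially step for step. Both arguments use Parseval on the Fourier side, discard the indices outside $\supp\widehat\gamma$ via $f^\dagger\in(\ker T)^\perp$, show the error multiplier converges pointwise (your $r_\beta$ is the paper's $1-m_\beta$), and dominate it by $1+C_1$ using $|m_\beta(k)|\le|\widehat\varphi_\beta(k)|\le C_1$ before applying dominated convergence.
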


\begin{proof}
In the noise-free case, we have
\[
\widehat g(k)=\widehat\gamma(k)\widehat f^\dagger(k),
\]
and therefore
\[
\widehat f_\beta(k)
=
m_\beta(k)\widehat f^\dagger(k),
\]
where
\[
m_\beta(k)
=
\frac{|\widehat\gamma(k)|^2\,\widehat{\phi_\beta}(k)}
{|\widehat\gamma(k)|^2+|1-\widehat{\phi_\beta}(k)|^2}.
\]

Hence
\[
\widehat f_\beta(k)-\widehat f^\dagger(k)
=
\bigl(m_\beta(k)-1\bigr)\widehat f^\dagger(k),
\]
and by Parseval's identity,
\[
\|f_\beta-f^\dagger\|_{L^2(\mathbb T)}^2
=
\sum_{k\in\mathbb Z}
|m_\beta(k)-1|^2\,|\widehat f^\dagger(k)|^2.
\]

We now prove that the right-hand side tends to zero by dominated convergence.

First, fix $k\in\mathbb Z$. Since
\[
\widehat{\phi_\beta}(k)\to1
\qquad\text{as } \beta\to0,
\]
we have
\[
|1-\widehat{\phi_\beta}(k)|^2\to0.
\]
Thus
\[
m_\beta(k)
=
\frac{|\widehat\gamma(k)|^2\,\widehat{\phi_\beta}(k)}
{|\widehat\gamma(k)|^2+|1-\widehat{\phi_\beta}(k)|^2}
\to
1
\]
whenever $\widehat\gamma(k)\neq0$.
Since $f^\dagger\in(\ker T)^\perp$, we have
$\widehat f^\dagger(k)=0$ whenever $\widehat\gamma(k)=0$.
Hence only the indices for which $\widehat\gamma(k)\neq0$
contribute to the error.

It remains to find a summable dominating sequence. By the uniform
$L^1$-boundedness of the mollifiers,
\[
|\widehat{\phi_\beta}(k)|
\le
\|\phi_\beta\|_{L^1(\mathbb T)}
\le C
\]
for some constant $C>0$ independent of $k$ and $\beta$.

Consequently,
\[
|m_\beta(k)|
=
\left|
\frac{|\widehat\gamma(k)|^2\,\widehat{\phi_\beta}(k)}
{|\widehat\gamma(k)|^2+|1-\widehat{\phi_\beta}(k)|^2}
\right|
\le
|\widehat{\phi_\beta}(k)|
\le C.
\]
Hence
\[
|m_\beta(k)-1|
\le
|m_\beta(k)|+1
\le C+1.
\]
Therefore,
\[
|m_\beta(k)-1|^2|\widehat f^\dagger(k)|^2
\le
(C+1)^2|\widehat f^\dagger(k)|^2.
\]
Since $f^\dagger\in L^2(\mathbb T)$, we have
\[
\sum_{k\in\mathbb Z}|\widehat f^\dagger(k)|^2<\infty.
\]
Thus the dominated convergence theorem applies to the series, and we obtain
\[
\sum_{k\in\mathbb Z}
|m_\beta(k)-1|^2|\widehat f^\dagger(k)|^2
\to0.
\]
Hence
\[
\|f_\beta-f^\dagger\|_{L^2(\mathbb T)}\to0,
\]
which proves the result.
\end{proof}


\subsection{Convergence analysis}
\label{sec:con}

The proposed variational mollification method is built around
a prescribed mollified version of the exact solution, referred to as the
target object.
We now investigate the convergence of the corresponding reconstructions
towards the exact solution as the target resolution increases
($\beta\downarrow0$).

The analysis follows the
classical bias-noise decomposition used in regularization theory.
Its main ingredients are:
\begin{itemize}
\item
a bias-noise decomposition of the reconstruction error;
\item
a Besov-Nikolskii smoothness assumption on the exact solution;
\item
separate analyses of the ordinary smooth and supersmooth regimes.
\end{itemize}

The regularization parameter $\beta$ is finally obtained by balancing
the approximation and noise contributions.

\subsubsection{Bias-noise decomposition}

The reconstruction error admits the classical decomposition
into an approximation error and a noise propagation term.
Assume that the exact data satisfy
\[
g=Tf^\dagger,
\]
and that the noisy measurements obey
\[
\|g^\delta-g\|\le\delta.
\]
For $\beta>0$, let $f_\beta^\delta$ denote the regularized reconstruction defined by
\[
\widehat f_\beta^\delta(k)
=
\Phi_\beta(k)\widehat g^\delta(k).
\]
The triangle inequality yields the decomposition
\[
\|f_\beta^\delta-f^\dagger\|
\le
\|f_\beta-f^\dagger\|
+
\|f_\beta^\delta-f_\beta\|,
\]
where the first term is the approximation error (bias), while the second measures the propagation of the data perturbation.
In the present convolution setting, the reconstruction operator is
diagonalized by the Fourier basis. 
Introducing the Fourier multiplier
\[
m_\beta(k):=
\Phi_\beta(k)\widehat\gamma(k),
\]
Parseval's identity gives
\[
\|f_\beta-f^\dagger\|^2
=
\sum_{k\in\mathbb Z}
|1-m_\beta(k)|^2
|\widehat f^\dagger(k)|^2,
\]
whereas
\[
\|f_\beta^\delta-f_\beta\|
\le
\delta\,\|\Phi_\beta\|_{\ell^\infty(\mathbb Z)}.
\]
Consequently,
\[
\|f_\beta^\delta-f^\dagger\|
\le
\|f_\beta-f^\dagger\|
+
\delta\,\|\Phi_\beta\|_{\ell^\infty(\mathbb Z)}.
\]

The convergence analysis therefore reduces to estimating the bias term together with the norm
\[
\|\Phi_\beta\|_{\ell^\infty(\mathbb Z)},
\]
whose behavior depends on the decay of the Fourier coefficients of the kernel.

\subsubsection{Besov-Nikolskii smoothness}

To estimate the bias term, we assume that the exact solution belongs to a periodic Besov-Nikolskii space.
For $f\in L^2(\mathbb T)$ and $N\in\mathbb N$, define the Fourier tail
\[
E_f(N):=\sum_{|k|>N} |\widehat f(k)|^2.
\]
The periodic Besov-Nikolskii space $B^u_{2,\infty}(\mathbb T)$
is the set of functions $f\in L^2(\mathbb T)$ such that
\[
\sup_{N\geq 0}(1+N)^{2u}E_f(N)<\infty.
\]
We equip $B^u_{2,\infty}(\mathbb T)$ with the norm
\[
\|f\|_{B^u_{2,\infty}(\mathbb T)}^2
:=
|\widehat f(0)|^2
+
\sup_{N\geq 0}(1+N)^{2u}E_f(N).
\]
Throughout the sequel, we assume that
\[
f^\dagger\in B^u_{2,\infty}(\mathbb T).
\]

\subsubsection{Assumptions on the kernel}

Throughout the paper, we distinguish two classes of deconvolution problems according to the decay of the Fourier coefficients of the convolution kernel.

\paragraph{Ordinary smooth assumption.}
We say that the kernel is \emph{ordinary smooth} if there exist constants
$c_\gamma,C_\gamma>0$ and $b>0$ such that
\begin{equation}
\label{eq:ordinary-smooth}
c_\gamma(1+|k|)^{-b}
\le
|\widehat\gamma(k)|
\le
C_\gamma(1+|k|)^{-b},
\qquad k\in\mathbb Z.
\end{equation}
The parameter $b$ characterizes the degree of ill-posedness.

\paragraph{Supersmooth assumption.}
We say that the kernel is \emph{supersmooth} if there exist constants
$c_\gamma,C_\gamma>0$ and $a>0$ such that
\begin{equation}
\label{eq:supersmooth}
c_\gamma e^{-C_\gamma|k|^a}
\le
|\widehat\gamma(k)|
\le
C_\gamma e^{-c_\gamma|k|^a},
\qquad k\in\mathbb Z.
\end{equation}
In this case, the exponential decay of the Fourier coefficients leads to a severely ill-posed inverse problem.

The subsequent analysis is carried out separately under each of the two
kernel assumptions.

\paragraph{Notation.}
For two nonnegative quantities $A$ and $B$, we write
\[
A\lesssim B
\]
if there exists a constant $C>0$, independent of the regularization parameter $\beta$ and the noise level $\delta$, such that $A\le CB$. The notations
\[
A\gtrsim B
\quad\text{and}\quad
A\asymp B
\]
have the usual analogous meanings.

\subsubsection{Ordinary smooth case}

We now apply the general convergence strategy under the ordinary smooth
assumption
\eqref{eq:ordinary-smooth}.
Throughout this subsection we also assume
that the mollifier has approximation order $d>0$, namely
\begin{equation}
|1-\widehat{\phi_\beta}(k)|
\le
C_\phi(\beta |k|)^d,
\qquad k\in\mathbb Z,\ \beta\in(0,1].
\label{eq:mollifier-order}
\end{equation}
The analysis proceeds in three steps. We first estimate the bias,
then derive a stability estimate for the reconstruction operator,
and finally combine both estimates to obtain the convergence rate.

\begin{Lemma}[Bias estimate]
\label{lem:bias-ordinary}
Assume that
$f^\dagger\in B^{u}_{2,\infty}(\mathbb T)$
with
$0<u<b+d$.
Then
\[
\|f_\beta-f^\dagger\|_{L^2(\mathbb T)}
\lesssim
\beta^{\frac{du}{b+d}}.
\]
\end{Lemma}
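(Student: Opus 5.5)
The plan is to bound the Fourier multiplier $1-m_\beta(k)$ pointwise and split the frequencies at a cut-off $N_\beta$ that balances the mollifier error against the kernel decay. Then I would estimate the low-frequency sum with the Besov--Nikolskii tail condition through a dyadic decomposition, and the high-frequency sum directly by the tail $E_{f^\dagger}(N_\beta)$.

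\textbf{Step 1 (pointwise bound).} Write $a_k=|\widehat\gamma(k)|^2$ and $\epsilon_k=1-\widehat{\phi_\beta}(k)$. Using the explicit multiplier from \eqref{eq:solution-fourier}, $m_\beta(k)=a_k\widehat{\phi_\beta}(k)/(a_k+|\epsilon_k|^2)$, so that
\[
1-m_\beta(k)=\frac{a_k\epsilon_k+|\epsilon_k|^2}{a_k+|\epsilon_k|^2}.
\]
This gives $|1-m_\beta(k)|\le |\epsilon_k|+\min\bigl(1,|\epsilon_k|^2/a_k\bigr)$, and also $|1-m_\beta(k)|\lesssim 1$ uniformly, since $|\epsilon_k|$ is bounded by the uniform $L^1$ bound on the mollifiers. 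At $k=0$, \eqref{eq:mollifier-order} forces $\epsilon_0=0$, hence $1-m_\beta(0)=0$.

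\textbf{Step 2 (cut-off).} Define $N_\beta:=\beta^{-d/(b+d)}\ge1$, so that $\beta^dN_\beta^{b+d}=1$. For $1\le|k|\le N_\beta$, \eqref{eq:mollifier-order} and the lower bound in \eqref{eq:ordinary-smooth} give
\[
|\epsilon_k|^2/a_k\lesssim \beta^{2d}|k|^{2d}(1+|k|)^{2b}\lesssim (|k|/N_\beta)^{2(b+d)}\le (|k|/N_\beta)^{b+d}.
\]
We also have $|\epsilon_k|\lesssim\beta^d|k|^d\le\beta^d|k|^{b+d}=(|k|/N_\beta)^{b+d}$. Hence $|1-m_\beta(k)|\lesssim(|k|/N_\beta)^{b+d}$ on $|k|\le N_\beta$. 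For $|k|>N_\beta$ we only use $|1-m_\beta(k)|\lesssim1$, so the high-frequency part contributes at most
\[
E_{f^\dagger}(\lfloor N_\beta\rfloor)\lesssim N_\beta^{-2u}=\beta^{2du/(b+d)}.
\]

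\textbf{Step 3 (low frequencies, the main obstacle).} The remaining task is to show
\[
\sum_{1\le|k|\le N_\beta}(|k|/N_\beta)^{2(b+d)}|\widehat f^\dagger(k)|^2\lesssim N_\beta^{-2u}.
\]
I would split into dyadic shells $2^{j-1}<|k|\le 2^j$ for $0\le j\le J$, where $2^J\sim N_\beta$. On each shell the weight is at most $(2^j/N_\beta)^{2(b+d)}$, and the shell mass is at most $E_{f^\dagger}(2^{j-1})\lesssim 2^{-2ju}\|f^\dagger\|_{B^u_{2,\infty}}^2$ (the $j=0$ shell, $|k|=1$, is bounded by $E_{f^\dagger}(0)$ and behaves the same way). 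Summing gives
\[
N_\beta^{-2(b+d)}\sum_{j\le J}2^{2j(b+d-u)}.
\]
Because $u<b+d$, this geometric sum is dominated by its last term, which is of order $N_\beta^{2(b+d-u)}$. The total is therefore $\lesssim N_\beta^{-2u}$, and this is exactly where the hypothesis $u<b+d$ enters. Combining Steps 2 and 3 with Parseval's identity yields $\|f_\beta-f^\dagger\|\lesssim N_\beta^{-u}=\beta^{du/(b+d)}$.
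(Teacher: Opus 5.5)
Your proof is correct and follows essentially the same route as the paper: a pointwise bound on $1-m_\beta(k)$, a split at the transition scale $\beta^{-d/(b+d)}$, the dyadic Besov--Nikolskii estimate on the low frequencies (where $u<b+d$ enters), and the tail bound on the high frequencies. The differences are minor: your bound $|1-m_\beta(k)|\le|\epsilon_k|+\min(1,|\epsilon_k|^2/a_k)$ removes the need for the paper's small constant $C_K$ guaranteeing $|1-\widehat{\phi_\beta}(k)|\le\frac12|\widehat\gamma(k)|$, and you also handle $k=0$ and write out the dyadic sum, which the paper leaves implicit or only sketches.
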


\begin{proof}
We first estimate the bias term
\[
\|f_\beta-f^\dagger\|_{L^2(\mathbb T)}^2
=
\sum_{k\in\mathbb Z}
|1-m_\beta(k)|^2 |\widehat f^\dagger(k)|^2,
\]
where
\[
m_\beta(k)
=
\frac{|\widehat\gamma(k)|^2\widehat{\phi_\beta}(k)}
{|\widehat\gamma(k)|^2+|1-\widehat{\phi_\beta}(k)|^2}.
\]

Set
\[
A_k:=|\widehat\gamma(k)|^2,
\qquad
B_{\beta,k}:=|1-\widehat{\phi_\beta}(k)|.
\]
Then
\[
1-m_\beta(k)
=
\frac{A_k(1-\widehat{\phi_\beta}(k))+B_{\beta,k}^2}
{A_k+B_{\beta,k}^2},
\]
and hence
\begin{equation}
|1-m_\beta(k)|
\le
\frac{A_kB_{\beta,k}+B_{\beta,k}^2}
{A_k+B_{\beta,k}^2}.
\label{eq:mbeta-basic}
\end{equation}
The key ingredient is the transition scale obtained by balancing
the approximation error of the mollifier with the decay of the kernel:
\[
(\beta |k|)^d\sim (1+|k|)^{-b}.
\]
Thus we set
\begin{equation}
K_\beta
:=
\left\lfloor C_K\,\beta^{-\frac{d}{b+d}}\right\rfloor ,
\label{eq:Kbeta-ordinary}
\end{equation}
with $C_K>0$ chosen sufficiently small. Then, for $1\le |k|\le K_\beta$,
\[
B_{\beta,k}
\le
\frac12|\widehat\gamma(k)|.
\]
Using \eqref{eq:mbeta-basic}, we obtain on this low-frequency range
\[
|1-m_\beta(k)|
\lesssim
\frac{B_{\beta,k}}{|\widehat\gamma(k)|}.
\]
Invoking the ordinary smooth assumption
\eqref{eq:ordinary-smooth} together with the approximation property
\eqref{eq:mollifier-order}, we obtain
\begin{equation}
|1-m_\beta(k)|
\lesssim
(\beta |k|)^d(1+|k|)^b,
\qquad
1\le |k|\le K_\beta .
\label{eq:mbeta-ordinary}
\end{equation}

The transition scale naturally induces a decomposition into low-
and high-frequency contributions.
For the low-frequency part,
\eqref{eq:mbeta-ordinary} yields
\[
\sum_{1\le |k|\le K_\beta}
|1-m_\beta(k)|^2|\widehat f^\dagger(k)|^2
\lesssim
\beta^{2d}
\sum_{1\le |k|\le K_\beta}
|k|^{2(d+b)}|\widehat f^\dagger(k)|^2.
\]
By a dyadic decomposition and the Besov-Nikolskii tail estimate,
\[
\sum_{1\le |k|\le K_\beta}
|k|^{2(d+b)}|\widehat f^\dagger(k)|^2
\lesssim
K_\beta^{2(d+b-u)},
\qquad 0<u<d+b.
\]
Therefore,
\[
\sum_{1\le |k|\le K_\beta}
|1-m_\beta(k)|^2|\widehat f^\dagger(k)|^2
\lesssim
\beta^{2d}K_\beta^{2(d+b-u)}
\asymp
\beta^{\frac{2du}{b+d}}.
\]

For the high-frequency part, the uniform $L^1$ boundedness of the mollifiers
implies $|\widehat{\phi_\beta}(k)|\le C$, hence $|m_\beta(k)|\le C$ and
\[
|1-m_\beta(k)|\le C.
\]
Thus, using again the Besov-Nikolskii tail estimate,
\[
\sum_{|k|>K_\beta}
|1-m_\beta(k)|^2|\widehat f^\dagger(k)|^2
\lesssim
\sum_{|k|>K_\beta}|\widehat f^\dagger(k)|^2
\lesssim
K_\beta^{-2u}
\asymp
\beta^{\frac{2du}{b+d}}.
\]

Combining the low- and high-frequency estimates gives
\begin{equation}
\|f_\beta-f^\dagger\|_{L^2(\mathbb T)}
\lesssim
\beta^{\frac{du}{b+d}},
\qquad 0<u<d+b.
\label{eq:bias-ordinary}
\end{equation}
This concludes the proof.
\end{proof}

For the stability analysis, we further assume that the
mollifier family is uniformly bounded in $L^1(\mathbb T)$,
\begin{equation}
\sup_{\beta\in(0,1]}
\|\phi_\beta\|_{L^1(\mathbb T)}<\infty,
\label{eq:mollifier-L1-bound}
\end{equation}
and satisfies the lower damping estimates
\begin{equation}
|1-\widehat{\phi_\beta}(k)|
\gtrsim
(\beta|k|)^d,
\qquad
1\le |k|\le c/\beta
\label{eq:mollifier-lower-damping}
\end{equation}
and
\begin{equation}
|1-\widehat{\phi_\beta}(k)|
\ge c_0,
\qquad
|k|>c/\beta,
\label{HF}
\end{equation}
for $\beta$ sufficiently small.

\begin{Lemma}[Stability estimate]
\label{lem:stability-ordinary}
Under the above assumptions,
\[
\|\Phi_\beta\|_{\ell^\infty(\mathbb Z)}
\lesssim
\beta^{-\frac{db}{b+d}}.
\]
\end{Lemma}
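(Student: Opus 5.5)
We bound the multiplier pointwise and then take the supremum over $k$. From \eqref{eq:solution-fourier},
\[
|\Phi_\beta(k)|
=
\frac{|\widehat\gamma(k)|\,|\widehat{\phi_\beta}(k)|}
{|\widehat\gamma(k)|^2+|1-\widehat{\phi_\beta}(k)|^2}.
\]
By \eqref{eq:mollifier-L1-bound}, $|\widehat{\phi_\beta}(k)|\le\|\phi_\beta\|_{L^1}\lesssim 1$ uniformly in $k$ and $\beta$. It therefore suffices to bound
\[
R_\beta(k):=\frac{a_k}{a_k^2+B_{\beta,k}^2},
\qquad a_k:=|\widehat\gamma(k)|,\quad B_{\beta,k}=|1-\widehat{\phi_\beta}(k)|.
\]
The elementary tool is the pair of inequalities $R_\beta(k)\le 1/a_k$ and $R_\beta(k)\le a_k/B_{\beta,k}^2$, so that $R_\beta(k)\le\min\{1/a_k,\;a_k/B_{\beta,k}^2\}$. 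I would split the frequencies into three ranges, matching the transition scale $K_\beta\asymp\beta^{-d/(b+d)}$ used in Lemma~\ref{lem:bias-ordinary}.

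\emph{Range $k=0$ and $1\le|k|\le K_\beta$.} For $k=0$, the bound is simply $1/a_0\lesssim 1$. For $1\le |k|\le K_\beta$, the ordinary smooth lower bound \eqref{eq:ordinary-smooth} gives
\[
R_\beta(k)\le \frac{1}{a_k}\lesssim (1+|k|)^b\lesssim K_\beta^{\,b}\asymp\beta^{-\frac{db}{b+d}}.
\]

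\emph{Range $K_\beta<|k|\le c/\beta$.} Here I use the upper bound in \eqref{eq:ordinary-smooth} together with the lower damping estimate \eqref{eq:mollifier-lower-damping}:
\[
R_\beta(k)\le\frac{a_k}{B_{\beta,k}^2}\lesssim \frac{|k|^{-b}}{\beta^{2d}|k|^{2d}}=\beta^{-2d}|k|^{-(b+2d)}.
\]
This is decreasing in $|k|$, so on this range it is bounded by its value at $|k|=K_\beta$:
\[
\beta^{-2d}K_\beta^{-(b+2d)}\asymp \beta^{-2d}\,\beta^{\frac{d(b+2d)}{b+d}}=\beta^{-\frac{db}{b+d}},
\]
after checking the exponent: $-2d+\frac{d(b+2d)}{b+d}=\frac{-2db-2d^2+db+2d^2}{b+d}=-\frac{db}{b+d}$. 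The case $K_\beta<1$ cannot arise for small $\beta$. If $K_\beta>c/\beta$, this range is empty, but since $d/(b+d)<1$ this does not happen for small $\beta$ either.

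\emph{Range $|k|>c/\beta$.} By \eqref{HF}, $R_\beta(k)\le a_k/c_0^2\lesssim 1\le\beta^{-db/(b+d)}$.

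Taking the supremum over all three ranges, and multiplying by the uniform bound on $|\widehat{\phi_\beta}(k)|$, gives the claim. The main obstacle is the middle range. There one must use the damping term rather than $1/a_k$, since the latter grows like $|k|^b$ without bound. The balancing at $K_\beta$ is exactly what makes the two bounds agree, so that the exponent $-db/(b+d)$ appears in both. The lower damping estimate \eqref{eq:mollifier-lower-damping} only holds for $|k|\ge1$, and the $k=0$ term is handled separately above; I would double-check that the implied constants do not depend on $\beta$.
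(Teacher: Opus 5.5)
Your proposal is correct and follows the paper's route. You make the same reduction via the uniform $L^1$ bound on $\widehat{\phi_\beta}$, split the frequencies at $K_\beta\asymp\beta^{-d/(b+d)}$ and $c/\beta$ exactly as the paper does, and use the same bound in each region ($1/|\widehat\gamma(k)|$ at low frequencies, the lower damping estimate at intermediate ones, and \eqref{HF} at high ones). Your monotonicity argument and explicit exponent check in the middle range, together with the separate treatment of $k=0$, supply details the paper leaves implicit.
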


\begin{proof}
By \eqref{eq:mollifier-L1-bound},
\[
|\widehat{\phi_\beta}(k)|\le C,
\]
and therefore
\begin{equation}
|\Phi_\beta(k)|
\lesssim
\frac{|\widehat\gamma(k)|}
{|\widehat\gamma(k)|^2+|1-\widehat{\phi_\beta}(k)|^2}.
\label{eq:Phi-basic-ordinary}
\end{equation}

As in the bias estimate, we split the analysis into the three frequency regions
\[
|k|\le K_\beta,\qquad
K_\beta<|k|\le c/\beta,
\qquad
|k|>c/\beta,
\]
where
\[
K_\beta
\asymp
\beta^{-\frac{d}{b+d}}.
\]

In the low-frequency region,
\eqref{eq:ordinary-smooth} immediately yields
\[
|\Phi_\beta(k)|
\lesssim
(1+|k|)^b
\lesssim
\beta^{-\frac{db}{b+d}}.
\]

In the intermediate-frequency region,
\eqref{eq:mollifier-lower-damping} yields
\[
|\Phi_\beta(k)|
\lesssim
\frac{(1+|k|)^{-b}}
     {(\beta|k|)^{2d}}
\lesssim
\beta^{-\frac{db}{b+d}}.
\]

Finally, by \eqref{HF}, for $|k|>c/\beta$,
\[
|\Phi_\beta(k)|
\lesssim
|\widehat\gamma(k)|
\lesssim
(1+|k|)^{-b},
\]
which is uniformly bounded and therefore of smaller order than
$\beta^{-db/(b+d)}$.

Combining the three estimates gives
\[
\|\Phi_\beta\|_{\ell^\infty(\mathbb Z)}
\lesssim
\beta^{-\frac{db}{b+d}}.
\]
The proof is complete.
\end{proof}

\begin{Theorem}[Order-optimal convergence rate]
\label{thm:ordinary}
Assume that
$f^\dagger\in B^{u}_{2,\infty}(\mathbb T)$,
with
$0<u<b+d$,
and choose
\[
\beta(\delta)
\asymp
\delta^{\frac{b+d}{d(u+b)}}.
\]
Then
\[
\|f_{\beta(\delta)}^\delta-f^\dagger\|_{L^2(\mathbb T)}
\lesssim
\delta^{\frac{u}{u+b}}.
\]
\end{Theorem}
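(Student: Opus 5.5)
We combine the bias--noise decomposition of Section~\ref{sec:con} with Lemma~\ref{lem:bias-ordinary} and Lemma~\ref{lem:stability-ordinary}, then choose $\beta$ to balance the two contributions. The triangle inequality together with the bound $\|f_\beta^\delta-f_\beta\|\le\delta\|\Phi_\beta\|_{\ell^\infty(\mathbb Z)}$ gives, for every $\beta\in(0,1]$ small enough for the damping assumptions \eqref{eq:mollifier-lower-damping}--\eqref{HF} to hold,
\[
\|f_\beta^\delta-f^\dagger\|
\lesssim
\beta^{\frac{du}{b+d}}+\delta\,\beta^{-\frac{db}{b+d}}.
\]
The implicit constants depend only on $\|f^\dagger\|_{B^u_{2,\infty}}$, $c_\gamma$, $C_\gamma$, $C_\phi$, $b$, $d$, $u$ and the $L^1$ bound, and not on $\beta$ or $\delta$. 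This uniformity is what permits substituting $\beta=\beta(\delta)$ afterwards.

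Next we introduce $s:=\beta^{d/(b+d)}$, so that the bound reads $s^{u}+\delta s^{-b}$. It is minimized, up to constants, when $s^{u+b}\asymp\delta$, that is, $s\asymp\delta^{1/(u+b)}$. Equivalently,
\[
\beta\asymp\delta^{\frac{b+d}{d(u+b)}},
\]
which is exactly the choice $\beta(\delta)$ in the statement. With this choice, $\beta^{du/(b+d)}=s^u\asymp\delta^{u/(u+b)}$. Also $\delta\beta^{-db/(b+d)}=\delta s^{-b}\asymp\delta^{1-b/(u+b)}=\delta^{u/(u+b)}$. Summing the two terms gives the claimed rate.

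The only delicate point is bookkeeping of the range of validity. Lemma~\ref{lem:stability-ordinary} requires $\beta$ sufficiently small, and $\beta(\delta)\to0$ as $\delta\to0$, so the estimate holds for all $\delta$ below some threshold $\delta_0$. For $\delta\ge\delta_0$ the statement is trivial after enlarging the constant, provided $\beta(\delta)\le1$ is arranged by choosing the proportionality constant appropriately. Both lemmas also presuppose $K_\beta\ge1$ and the condition $0<u<b+d$, which is assumed in the theorem. No further obstacle is expected, since the analytic work is already contained in the two lemmas.
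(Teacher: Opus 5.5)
Your proposal is correct and follows the paper's proof: you combine the bias--noise decomposition with Lemmas~\ref{lem:bias-ordinary} and~\ref{lem:stability-ordinary} and balance the two terms. Your substitution $s=\beta^{d/(b+d)}$ only makes the exponent arithmetic explicit, but your remark that the range $\delta\ge\delta_0$ is ``trivial'' is not justified, since the damping conditions are assumed only for small $\beta$; instead, restrict to $\delta$ small enough that $\beta(\delta)$ lies in the admissible range, which is how the rate is meant anyway.
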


\begin{proof}
Combining Lemmas~\ref{lem:bias-ordinary}
and~\ref{lem:stability-ordinary}
with the deterministic error decomposition yields
\[
\|f_\beta^\delta-f^\dagger\|
\lesssim
\beta^{\frac{du}{b+d}}
+
\delta
\beta^{-\frac{db}{b+d}}.
\]
Balancing the approximation and stability terms gives
\[
\beta(\delta)
\asymp
\delta^{\frac{b+d}{d(u+b)}},
\]
which immediately yields the stated estimate.
\end{proof}

\begin{Remark}[Order-optimality and saturation]

The convergence rate established in Theorem~\ref{thm:ordinary},
\[
\|f_{\beta(\delta)}^\delta-f^\dagger\|_{L^2(\mathbb T)}
=
O\!\left(\delta^{\frac{u}{u+b}}\right),
\]
is order-optimal, in the minimax sense, for deterministic
deconvolution problems with polynomially decaying Fourier coefficients
\[
|\widehat\gamma(k)|\asymp (1+|k|)^{-b}
\]
and Besov-Nikolskii smoothness
$f^\dagger\in B^u_{2,\infty}(\mathbb T)$.
The qualification of the method is determined by the approximation order
$d$ of the mollifier. This is reflected by the restriction
\[
0<u<b+d,
\]
which appears in the proof of the bias estimate. In other words,
increasing the smoothness of the exact solution beyond this threshold
does not further improve the convergence rate unless the approximation
order of the mollifier is increased accordingly.
\end{Remark}

\subsubsection{Supersmooth case}

We now apply the general convergence strategy under the supersmooth
assumption \eqref{eq:supersmooth}.
Throughout this subsection, we retain the mollifier assumptions
\eqref{eq:mollifier-order} and \eqref{eq:mollifier-L1-bound}.
Thus,
\[
c_\gamma e^{-C_\gamma |k|^a}
\le
|\widehat\gamma(k)|
\le
C_\gamma e^{-c_\gamma |k|^a},
\qquad k\in\mathbb Z,
\]
for some constants $c_\gamma,C_\gamma>0$ and $a>0$.
The convergence analysis follows exactly the same three-step strategy as
in the ordinary smooth case. We first estimate the approximation error,
then derive a stability estimate for the reconstruction operator, and
finally combine both estimates to obtain the convergence rate.

\begin{Lemma}[Bias estimate]
\label{lem:bias-supersmooth}
Assume that
$f^\dagger\in B^u_{2,\infty}(\mathbb T)$.
Then
\[
\|f_\beta-f^\dagger\|_{L^2(\mathbb T)}
\lesssim
(\log(\beta^{-1}))^{-u/a}.
\]
\end{Lemma}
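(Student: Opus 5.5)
We follow the scheme of Lemma~\ref{lem:bias-ordinary}. Starting from the Parseval identity
\[
\|f_\beta-f^\dagger\|^2=\sum_{k}|1-m_\beta(k)|^2|\widehat f^\dagger(k)|^2,
\]
we reuse the pointwise bound \eqref{eq:mbeta-basic} with $A_k=|\widehat\gamma(k)|^2$ and $B_{\beta,k}=|1-\widehat{\phi_\beta}(k)|$. We then split the frequencies at a transition scale $K_\beta$. In the supersmooth setting this scale is chosen logarithmically,
\[
K_\beta:=\left\lfloor \bigl(\kappa\log(\beta^{-1})\bigr)^{1/a}\right\rfloor,
\]
where $\kappa>0$ is small and will be fixed below. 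The term $k=0$ contributes nothing, since $\widehat{\phi_\beta}(0)=1$ gives $m_\beta(0)=1$ whenever $\widehat\gamma(0)\neq0$, which holds by \eqref{eq:supersmooth}.

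\textbf{Low frequencies, $1\le|k|\le K_\beta$.} As in the ordinary smooth case, we want $B_{\beta,k}\le\frac12|\widehat\gamma(k)|$. This gives $|1-m_\beta(k)|\lesssim B_{\beta,k}/|\widehat\gamma(k)|$, because the numerator is at most $A_kB+B^2\le\frac32A_kB$ when $B\le1$, or simply $\lesssim B_{\beta,k}A_k^{1/2}$, while the denominator is at least $A_k$. By \eqref{eq:mollifier-order} and the lower bound in \eqref{eq:supersmooth},
\[
\frac{B_{\beta,k}}{|\widehat\gamma(k)|}\le \frac{C_\phi}{c_\gamma}\,\beta^d K_\beta^d\, e^{C_\gamma K_\beta^a}\le \frac{C_\phi}{c_\gamma}\,\beta^{d}(\log\beta^{-1})^{d/a}\,\beta^{-C_\gamma\kappa}.
\]
Choosing $\kappa<d/(2C_\gamma)$ makes this at most $\beta^{d/2}(\log\beta^{-1})^{d/a}$. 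That quantity tends to $0$, so it is below $\frac12$ for small $\beta$, which justifies the smallness condition. For $\beta$ bounded away from $0$ the estimate is trivial, since the error is bounded by $(C+1)\|f^\dagger\|$. The low-frequency sum is then bounded by
\[
\lesssim \beta^{d}(\log\beta^{-1})^{2d/a}\|f^\dagger\|^2 .
\]
Any positive power of $\beta$ decays faster than $(\log\beta^{-1})^{-2u/a}$, so this part is negligible. Unlike the ordinary case, no dyadic decomposition or restriction on $u$ is needed, because the exponential slack absorbs every polynomial factor.

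\textbf{High frequencies, $|k|>K_\beta$.} Here we use the uniform $L^1$ bound \eqref{eq:mollifier-L1-bound}. It gives $|m_\beta(k)|\le|\widehat{\phi_\beta}(k)|\le C$, hence $|1-m_\beta(k)|\le C+1$. The Besov--Nikolskii tail condition then yields
\[
\sum_{|k|>K_\beta}|1-m_\beta(k)|^2|\widehat f^\dagger(k)|^2\lesssim E_{f^\dagger}(K_\beta)\lesssim (1+K_\beta)^{-2u}\asymp(\log\beta^{-1})^{-2u/a}.
\]
Combining the two ranges and taking square roots gives the claimed bound $(\log\beta^{-1})^{-u/a}$.

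The main obstacle is the choice of $K_\beta$. It must be small enough that the exponential growth $e^{C_\gamma K_\beta^a}$ of $1/|\widehat\gamma|$ is beaten by $\beta^d$, yet large enough that the tail $K_\beta^{-2u}$ still decays like a power of $\log\beta^{-1}$. The gap between the constants $c_\gamma$ and $C_\gamma$ in \eqref{eq:supersmooth} forces the choice of $\kappa$ to be made with the lower-bound constant $C_\gamma$, and the constant $\kappa$ affects only the implied constant, not the rate.
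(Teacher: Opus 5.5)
Your proof is correct and follows the paper's argument: the same logarithmic transition scale $K_\beta\asymp(\log\beta^{-1})^{1/a}$ with a small constant (your $\kappa$ plays the role of the paper's $\eta^a$, giving the exponent $d-C_\gamma\kappa$), a low-frequency part that decays like a power of $\beta$, and the Besov--Nikolskii tail for $|k|>K_\beta$; your uniform bound $\beta^{d}(\log\beta^{-1})^{2d/a}\|f^\dagger\|^2$ on the low frequencies is a simpler substitute for the paper's dyadic estimate. One small slip: $B_{\beta,k}\le\frac12|\widehat\gamma(k)|$ does not give $A_kB_{\beta,k}+B_{\beta,k}^2\le\frac32A_kB_{\beta,k}$, since that would need $B_{\beta,k}\le\frac12A_k$; however, your alternative bound $\lesssim A_k^{1/2}B_{\beta,k}$ is correct, and in fact $|1-m_\beta(k)|\lesssim B_{\beta,k}/|\widehat\gamma(k)|$ holds even without the smallness condition, because $A_k+B_{\beta,k}^2\ge2A_k^{1/2}B_{\beta,k}$.
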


\begin{proof}
As in the ordinary smooth case,
\[
\|f_\beta-f^\dagger\|_{L^2(\mathbb T)}^2
=
\sum_{k\in\mathbb Z}
|1-m_\beta(k)|^2
|\widehat f^\dagger(k)|^2,
\]
where
\[
m_\beta(k)
=
\frac{|\widehat\gamma(k)|^2\widehat{\phi_\beta}(k)}
     {|\widehat\gamma(k)|^2+|1-\widehat{\phi_\beta}(k)|^2}.
\]
The same balancing principle as in the ordinary smooth case now leads
to a logarithmic transition scale. Balancing the approximation error of
the mollifier with the exponential decay of the kernel yields
\begin{equation}
K_\beta
:=
\left\lfloor
\eta(\log(\beta^{-1}))^{1/a}
\right\rfloor,
\label{eq:Kbeta-supersmooth}
\end{equation}
where $\eta>0$ is chosen sufficiently small.
Proceeding as in the proof of the ordinary smooth case, one obtains
\begin{equation}
|1-m_\beta(k)|
\lesssim
\beta^\rho |k|^d,
\qquad
1\le |k|\le K_\beta,
\qquad
\rho:=d-C_\gamma\eta^a>0.
\label{eq:mbeta-supersmooth}
\end{equation}
The transition scale again induces a decomposition into low- and
high-frequency contributions. Using the dyadic estimate established in
the ordinary smooth case, the low-frequency contribution satisfies
\[
\sum_{|k|\le K_\beta}
|1-m_\beta(k)|^2
|\widehat f^\dagger(k)|^2
\lesssim
\beta^{2\rho}
K_\beta^{2(d-u)_+}.
\]
Since $K_\beta$ grows only logarithmically whereas
$\beta^{2\rho}$ decays algebraically, this term is negligible.
For the high-frequency contribution, the uniform estimate
$|1-m_\beta(k)|\lesssim1$ together with the Besov-Nikolskii tail estimate
gives
\[
\sum_{|k|>K_\beta}
|1-m_\beta(k)|^2
|\widehat f^\dagger(k)|^2
\lesssim
K_\beta^{-2u}
\asymp
(\log(\beta^{-1}))^{-2u/a}.
\]
Consequently,
\begin{equation}
\|f_\beta-f^\dagger\|_{L^2(\mathbb T)}
\lesssim
(\log(\beta^{-1}))^{-u/a}.
\label{eq:bias-supersmooth}
\end{equation}
This concludes the proof.
\end{proof}

\begin{Lemma}[Stability estimate]
\label{lem:stability-supersmooth}
Assume that the mollifier family satisfies
\eqref{eq:mollifier-lower-damping}
and \eqref{HF}.
Then
\[
\|\Phi_\beta\|_{\ell^\infty(\mathbb Z)}
\lesssim
\beta^{-d}.
\]
\end{Lemma}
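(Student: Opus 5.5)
Following the proof of Lemma~\ref{lem:stability-ordinary}, we start from the pointwise bound obtained from the uniform $L^1$ bound \eqref{eq:mollifier-L1-bound}. Since $|\widehat{\phi_\beta}(k)|\le C$, we have
\[
|\Phi_\beta(k)|\lesssim \frac{|\widehat\gamma(k)|}{|\widehat\gamma(k)|^2+|1-\widehat{\phi_\beta}(k)|^2}.
\]
We then bound the right-hand side separately on three regions: $k=0$, the range $1\le|k|\le c/\beta$, and the range $|k|>c/\beta$. Unlike the ordinary smooth case, we do not introduce a transition scale $K_\beta$. Instead we use the elementary inequality
\[
\frac{x}{x^2+y^2}\le\min\Bigl(\frac1x,\frac{x}{y^2},\frac{1}{2y}\Bigr),\qquad x,y>0,
\]
where the last bound is the AM--GM estimate $x^2+y^2\ge 2xy$. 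It removes the need to track where $|\widehat\gamma(k)|$ crosses $|1-\widehat{\phi_\beta}(k)|$.

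At $k=0$, the bound is $|\Phi_\beta(0)|\le C/|\widehat\gamma(0)|$, which is a constant. For $|k|>c/\beta$, condition \eqref{HF} gives $|1-\widehat{\phi_\beta}(k)|\ge c_0$. Hence $|\Phi_\beta(k)|\lesssim|\widehat\gamma(k)|/c_0^2\le C_\gamma/c_0^2$, which is uniformly bounded.

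The middle range $1\le|k|\le c/\beta$ is the main step. Applying the AM--GM bound together with the lower damping estimate \eqref{eq:mollifier-lower-damping} gives
\[
|\Phi_\beta(k)|\lesssim \frac{1}{|1-\widehat{\phi_\beta}(k)|}\lesssim (\beta|k|)^{-d}\le \beta^{-d},
\]
because $|k|\ge1$. We expect this step to be the main obstacle only in making sure the AM--GM route is legitimate. The two-regime argument of the ordinary smooth case fails here: the bound $|\Phi_\beta(k)|\lesssim 1/|\widehat\gamma(k)|\lesssim e^{C_\gamma|k|^a}$ grows super-algebraically, so on the low-frequency side we must not bound by the inverse kernel. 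The AM--GM bound avoids the kernel decay entirely, and it is crude but sufficient for the stated rate. If one wants something sharper, one could keep the $x/y^2$ branch near $|k|\sim c/\beta$, but this is not needed.

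Taking the supremum over all three regions gives $\|\Phi_\beta\|_{\ell^\infty(\mathbb Z)}\lesssim\max(1,\beta^{-d})=\beta^{-d}$ for $\beta\in(0,1]$ sufficiently small. For the remaining $\beta$, bounded away from $0$, the claim follows by enlarging the constant, since the infimum in Proposition~\ref{pro-morozov} gives a finite bound for each fixed $\beta$. Note that the supersmooth hypothesis \eqref{eq:supersmooth} enters only through the boundedness of $|\widehat\gamma(k)|$ and the condition $\widehat\gamma(0)\neq0$.
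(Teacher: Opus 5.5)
Your argument is correct, but it is organized differently from the paper's. The paper reuses the logarithmic transition scale $K_\beta\asymp(\log(\beta^{-1}))^{1/a}$ from the bias lemma. For $|k|\le K_\beta$ it bounds $|\Phi_\beta(k)|\lesssim 1/|\widehat\gamma(k)|\lesssim e^{C_\gamma K_\beta^a}=\beta^{-C_\gamma\eta^a}\lesssim\beta^{-d}$, which forces $C_\gamma\eta^a\le d$. It applies the AM--GM bound with \eqref{eq:mollifier-lower-damping} only on $K_\beta<|k|\le c/\beta$, and uses \eqref{HF} beyond $c/\beta$.

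You notice that \eqref{eq:mollifier-lower-damping} holds on all of $1\le|k|\le c/\beta$. So the AM--GM bound already covers everything down to $|k|=1$. The only index that needs the kernel is $k=0$, where in fact $\Phi_\beta(0)=1/\widehat\gamma(0)$ exactly, since $\widehat{\phi_\beta}(0)=1$. This removes $K_\beta$ and the constraint on $\eta$. It also shows that the bound $\beta^{-d}$ uses only $\sup_k|\widehat\gamma(k)|<\infty$ and $\widehat\gamma(0)\neq0$. The estimate is therefore kernel-independent, and cruder than $\beta^{-db/(b+d)}$ would be in the ordinary smooth case. The paper's version mainly buys structural parallelism with the bias estimate. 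Its low-frequency region is also what silently handles $k=0$, where \eqref{eq:mollifier-lower-damping} says nothing.

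Two side remarks in your write-up should be corrected.

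First, saying one ``must not'' bound by the inverse kernel at low frequencies is too strong. The paper does exactly that, and harmlessly, because the region is only logarithmically large, so $e^{C_\gamma K_\beta^a}$ is still a power of $\beta^{-1}$. The inverse-kernel bound only fails on an algebraically large range.

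Second, your last sentence about $\beta$ bounded away from $0$ is not justified. Finiteness of $\|\Phi_\beta\|_{\ell^\infty(\mathbb Z)}$ for each fixed $\beta$ gives no constant that is uniform over an interval. Also, Proposition~\ref{pro-morozov} requires nonnegative even mollifiers, which are not assumed here. Since \eqref{eq:mollifier-lower-damping} and \eqref{HF} are only assumed for small $\beta$, and only $\beta\to0$ matters in Theorem~\ref{thm:supersmooth}, you should simply drop that sentence.
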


\begin{proof}
By the uniform $L^1$-boundedness of the mollifier family,
\[
|\widehat{\phi_\beta}(k)|\le C,
\]
and therefore
\begin{equation}
|\Phi_\beta(k)|
\lesssim
\frac{|\widehat\gamma(k)|}
     {|\widehat\gamma(k)|^2+|1-\widehat{\phi_\beta}(k)|^2}.
\label{eq:Phi-basic-supersmooth}
\end{equation}

Using the same logarithmic transition scale
\[
K_\beta
\asymp
(\log(\beta^{-1}))^{1/a},
\]
we distinguish three frequency regions.

For $|k|\le K_\beta$, the supersmooth assumption
\eqref{eq:supersmooth} gives
\[
|\Phi_\beta(k)|
\lesssim
e^{C_\gamma K_\beta^a}
\lesssim
\beta^{-d},
\]
provided that $\eta>0$ is chosen sufficiently small.
For $K_\beta<|k|\le c/\beta$, we use
\[
A^2+B^2\ge 2AB,
\qquad A,B\ge0,
\]
with
\[
A=|\widehat\gamma(k)|,
\qquad
B=|1-\widehat{\phi_\beta}(k)|.
\]
Together with \eqref{eq:Phi-basic-supersmooth}, this gives
\[
|\Phi_\beta(k)|
\lesssim
\frac{1}{|1-\widehat{\phi_\beta}(k)|}.
\]
Invoking \eqref{eq:mollifier-lower-damping}, we obtain
\[
|\Phi_\beta(k)|
\lesssim
(\beta|k|)^{-d}
\lesssim
\beta^{-d}.
\]

Finally, for $|k|>c/\beta$, condition \eqref{HF} yields
\[
|\Phi_\beta(k)|
\lesssim
|\widehat\gamma(k)|
\lesssim
1
\lesssim
\beta^{-d}.
\]

Consequently,
\begin{equation}
\|\Phi_\beta\|_{\ell^\infty(\mathbb Z)}
\lesssim
\beta^{-d}.
\label{eq:amp-supersmooth}
\end{equation}
The proof is complete.
\end{proof}

\begin{Theorem}[Order-optimal logarithmic convergence rate]
\label{thm:supersmooth}
Assume that
$f^\dagger\in B^u_{2,\infty}(\mathbb T)$
and choose
\[
\beta(\delta)
=
\delta^{1/(2d)}.
\]
Then
\[
\|f_{\beta(\delta)}^\delta-f^\dagger\|_{L^2(\mathbb T)}
\lesssim
(\log(\delta^{-1}))^{-u/a}.
\]
\end{Theorem}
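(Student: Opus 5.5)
The plan mirrors the proof of Theorem~\ref{thm:ordinary}: use the deterministic bias--noise decomposition from the beginning of Section~\ref{sec:con}, bound the bias by Lemma~\ref{lem:bias-supersmooth} and the noise term by Lemma~\ref{lem:stability-supersmooth}, and then insert the prescribed parameter choice. Combining the two lemmas gives, for every $\beta\in(0,1]$ small enough that the assumptions \eqref{eq:mollifier-lower-damping} and \eqref{HF} hold,
\[
\|f_\beta^\delta-f^\dagger\|
\le
\|f_\beta-f^\dagger\|+\delta\,\|\Phi_\beta\|_{\ell^\infty(\mathbb Z)}
\lesssim
(\log(\beta^{-1}))^{-u/a}+\delta\,\beta^{-d}.
\]

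Next I substitute $\beta(\delta)=\delta^{1/(2d)}$. The noise term then equals $\delta\cdot\delta^{-1/2}=\delta^{1/2}$. For the bias, $\log(\beta(\delta)^{-1})=\frac{1}{2d}\log(\delta^{-1})$, so
\[
(\log(\beta(\delta)^{-1}))^{-u/a}=(2d)^{u/a}(\log(\delta^{-1}))^{-u/a}\asymp(\log(\delta^{-1}))^{-u/a}.
\]
The constant $(2d)^{u/a}$ does not depend on $\delta$ and is absorbed in $\lesssim$. It remains to see that $\delta^{1/2}\lesssim(\log(\delta^{-1}))^{-u/a}$ for $\delta\in(0,\delta_0]$. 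This holds because $t\mapsto t^{u/a}e^{-t/2}$ is bounded on $[\log\delta_0^{-1},\infty)$, where $t=\log(\delta^{-1})$, so the algebraic noise term is dominated by the logarithmic bias term.

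The only point needing care, and the main obstacle, is that the lemmas are stated for $\beta$ sufficiently small and with $\eta$ fixed. I must check that $\beta(\delta)\to0$ as $\delta\to0$ (clear), so that for $\delta\le\delta_0$ the estimates apply with constants independent of $\delta$. I also need the same choice of $\eta$ (small enough that $\rho>0$ and $e^{C_\gamma K_\beta^a}\lesssim\beta^{-d}$) to be valid in both lemmas simultaneously. Since both conditions only require $\eta$ small, I would fix one such $\eta$ once and for all.

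For $\delta$ bounded away from zero, the estimate holds trivially: the error is bounded on compact parameter ranges by the uniform $L^1$ bound and the crude bound on $\|\Phi_\beta\|_{\ell^\infty(\mathbb Z)}$. This completes the argument. A remark could add that any choice $\beta(\delta)=\delta^{\kappa}$ with $0<\kappa<1/d$ gives the same logarithmic rate, and this rate is the known order-optimal one for supersmooth kernels.
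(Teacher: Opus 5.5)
Your proposal is correct and follows the paper's own route. You combine the bias--noise decomposition with Lemmas~\ref{lem:bias-supersmooth} and~\ref{lem:stability-supersmooth}, substitute $\beta(\delta)=\delta^{1/(2d)}$ so that the noise term becomes $\delta^{1/2}$, and note that this is dominated by the logarithmic bias term; the explicit constant $(2d)^{u/a}$ and the boundedness of $t^{u/a}e^{-t/2}$ simply make precise what the paper calls ``negligible.'' Your extra care about fixing $\eta$ and about $\delta$ bounded away from zero is harmless but unnecessary, since $\eta$ only appears inside the lemmas' proofs and the statement is meant asymptotically for small $\delta$. In fact, the ``crude bound'' you invoke for large $\delta$ is not guaranteed by the hypotheses, which are only assumed for small $\beta$.
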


\begin{proof}
Combining
Lemmas~\ref{lem:bias-supersmooth}
and~\ref{lem:stability-supersmooth}
with the deterministic error decomposition yields
\[
\|f_\beta^\delta-f^\dagger\|
\lesssim
(\log(\beta^{-1}))^{-u/a}
+
\delta\,\beta^{-d}.
\]
Choosing
\(
\beta(\delta)=\delta^{1/(2d)}
\)
gives a noise contribution of order
\(O(\delta^{1/2})\),
which is negligible compared with
\(
(\log(\delta^{-1}))^{-u/a}
\).
The result follows immediately.
\end{proof}

\begin{Remark}[Order-optimality in the supersmooth case]
Under the supersmooth assumption
\[
|\widehat\gamma(k)|\asymp e^{-c|k|^a},
\]
the convergence rate
\[
\|f_{\beta(\delta)}^\delta-f^\dagger\|_{L^2(\mathbb T)}
=
O\!\left((\log(\delta^{-1}))^{-u/a}\right)
\]
is order-optimal, in the minimax sense, for exponentially ill-posed
deconvolution problems with Besov-Nikolskii smoothness
$f^\dagger\in B^u_{2,\infty}(\mathbb T)$.
Unlike the ordinary smooth case, no saturation phenomenon occurs:
the logarithmic convergence rate is entirely dictated by the exponential
decay of the Fourier coefficients of the kernel and therefore cannot be
improved uniformly over bounded subsets of
$B^u_{2,\infty}(\mathbb T)$.
\end{Remark}

\section{Numerical Results: Reconstruction of Circular Wind Profiles}
\label{sec:num}

In this section, to validate theoretical results, we use our framework on a concrete inverse problem arising in environmental monitoring: the recovery of wind direction probability densities from sensor data.

\subsection{Problem Formulation and Data Simulation}
The direction of the wind is a circular quantity defined on the unit circle. Accurate estimation of directional modes is critical for wind energy harvesting, where the joint probability of wind speed and direction determines available energy potential \cite{carta2008}, and precise directional data is a prerequisite for optimizing turbine layout to minimize wake losses and maximize array efficiency \cite{herbert2014}.

We simulate a ground truth signal $f \in L^2(\mathbb{T})$ representing a bimodal wind profile, constructed as a mixture of two Wrapped Gaussian (WG) distributions:
\begin{equation}
    f(\theta) = 0.65 \cdot \mathcal{WG}(\theta; 4.7, 0.5) + 0.35 \cdot \mathcal{WG}(\theta; 0.8, 0.25),
\end{equation}
where the dominant mode corresponds to a Westerly wind and the secondary mode to a North-Easterly current. The observations are discretized on a grid of size $N=512$.

The sensor blurring is modeled by the periodic kernel $\widehat{\gamma}_k = e^{-\alpha k^2}$ with diffusion parameter $\alpha=0.01$. The convolved data is further corrupted by additive white Gaussian noise at a relative level of $\delta \approx 20\%$. As shown in Figure \ref{fig:results}(b), this level of degradation effectively merges the two distinct modes into a single broad distribution, rendering the secondary peak indistinguishable in the raw data.

\subsection{Parameter Selection via the L-curve Method}
A pervasive challenge in operational settings is the absence of a priori knowledge regarding the exact noise level $\delta$, which precludes the direct use of the Discrepancy Principle. To address this "blind" deconvolution scenario, we employ the L-curve criterion \cite{hansen1992}.

We compute the regularized solution $f_\beta$ for a logarithmic range of the mollification parameter $\beta \in [10^{-5}, 10^{-1}]$. We plot the trade-off curve parameterized by $\beta$ in the log-log scale:
\begin{equation}\label{eq:lcurve}
    \mathcal{L}(\beta) = \left( \log \| T f_\beta - g^\delta \|_{L^2}, \,\, \log \| f_\beta \|_{L^2} \right).
\end{equation}
The optimal parameter $\beta_{\text{opt}}$ is identified as the vertex (corner) of this curve, where the curvature
\begin{equation}\label{eq:curvature}
    \kappa(\beta) = \frac{u'v'' - v'u''}{(u'^2 + v'^2)^{3/2}}
\end{equation}
is maximal, where $u = \log\|Tf_\beta - g^\delta\|_{L^2}$, $v = \log\|f_\beta\|_{L^2}$, and primes denote derivatives with respect to $t = \log\beta$.
This point provides a practical balance between data fidelity
and regularization (see Figure \ref{fig:lcurve}).
\begin{figure}[H]
    \centering
    \includegraphics[width=\textwidth]{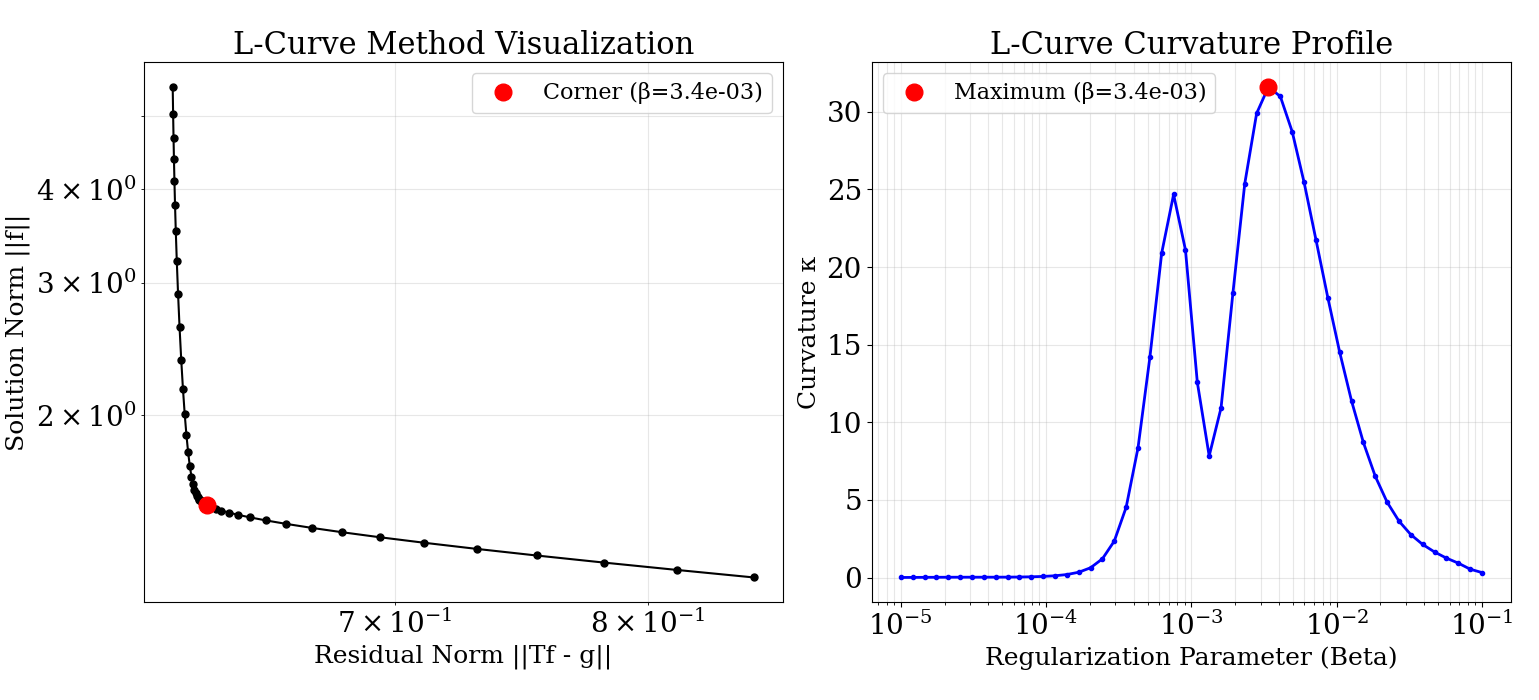}
    \caption{Parameter selection via the L-curve. Left: The L-curve in
    log-log scale; the optimal $\beta$ (red dot) is located at the corner where
    curvature is maximal. Right: The curvature profile $\kappa(\beta)$,
    confirming the identified corner corresponds to a sharp, well-defined maximum.}
    \label{fig:lcurve}
\end{figure}

\subsection{Stability Analysis}

To assess the L-curve selection, we examine the reconstruction error
and numerical stability in Figure~\ref{fig:error_analysis}.
The relative error (left panel) exhibits a well-defined minimum, and the
value of $\beta$ selected by the L-curve criterion (red dot) lies close
to this minimum. Thus, in this experiment, the L-curve provides a
near-optimal parameter choice without requiring knowledge of the ground
truth. The condition number shown in the right panel provides a
complementary view of the regularization effect: at the selected value
of $\beta$, the condition number is reduced from approximately
$10^{22}$ to $10^{20}$, while remaining large, as expected for this
severely ill-posed problem.

\begin{figure}[H]
    \centering
    \includegraphics[width=\textwidth]{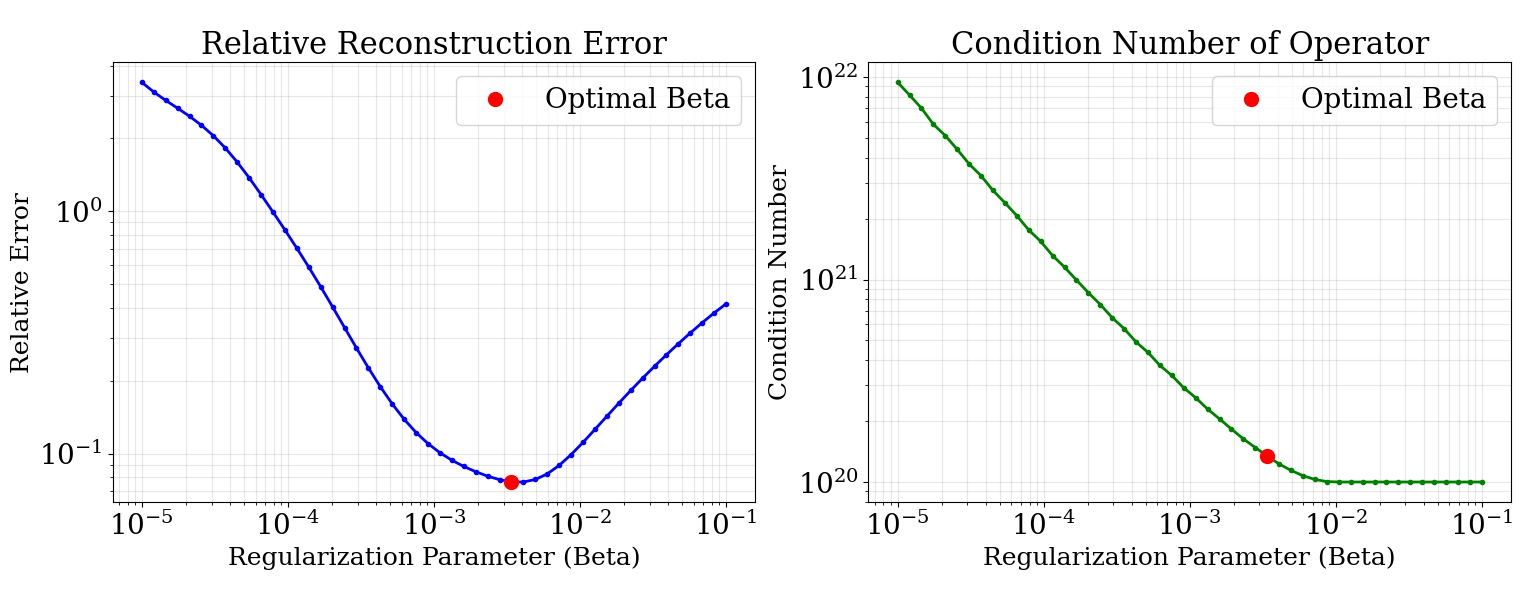}
    \caption{Stability metrics. Left: The relative error showing the optimal $\beta$ yields minimum error. Right: The condition number decreases with regularization, quantifying the gain in numerical stability.}
    \label{fig:error_analysis}
\end{figure}
\subsection{Reconstruction Results}
The numerical results are presented in Figure \ref{fig:results}. The L-curve method identifies an optimal parameter of $\beta \approx 3.4 \times 10^{-3}$. As a post-processing step, we enforce physical consistency by applying a non-negativity projection followed by renormalization:
\begin{equation}
\label{eq:projection}
    f_\beta \leftarrow \frac{\max(0,\, F^{-1}(\widehat{f}_\beta))}%
    {\displaystyle\sum_{i} \max\!\left(0,\, F^{-1}(\widehat{f}_\beta)_i\right)},
\end{equation}
so that the reconstruction integrates to unity and constitutes a valid discrete probability distribution, consistent with the discretization of $f$. We note that the theoretical convergence results of Section~\ref{sec:con} are established for the unconstrained $f_\beta$; the projection is applied solely to ensure interpretability of the final estimate.

Despite significant noise and blurring, the reconstruction successfully disentangles the bimodal structure of the wind profile. The secondary North-Easterly mode, which was masked in the observation $g^\delta$, is recovered with accurate localization. This demonstrates the stability of the mollified inversion scheme and the efficacy of the L-curve criterion in handling blind deconvolution problems on the circle.

\begin{figure}[h]
    \centering
    \includegraphics[width=\textwidth]{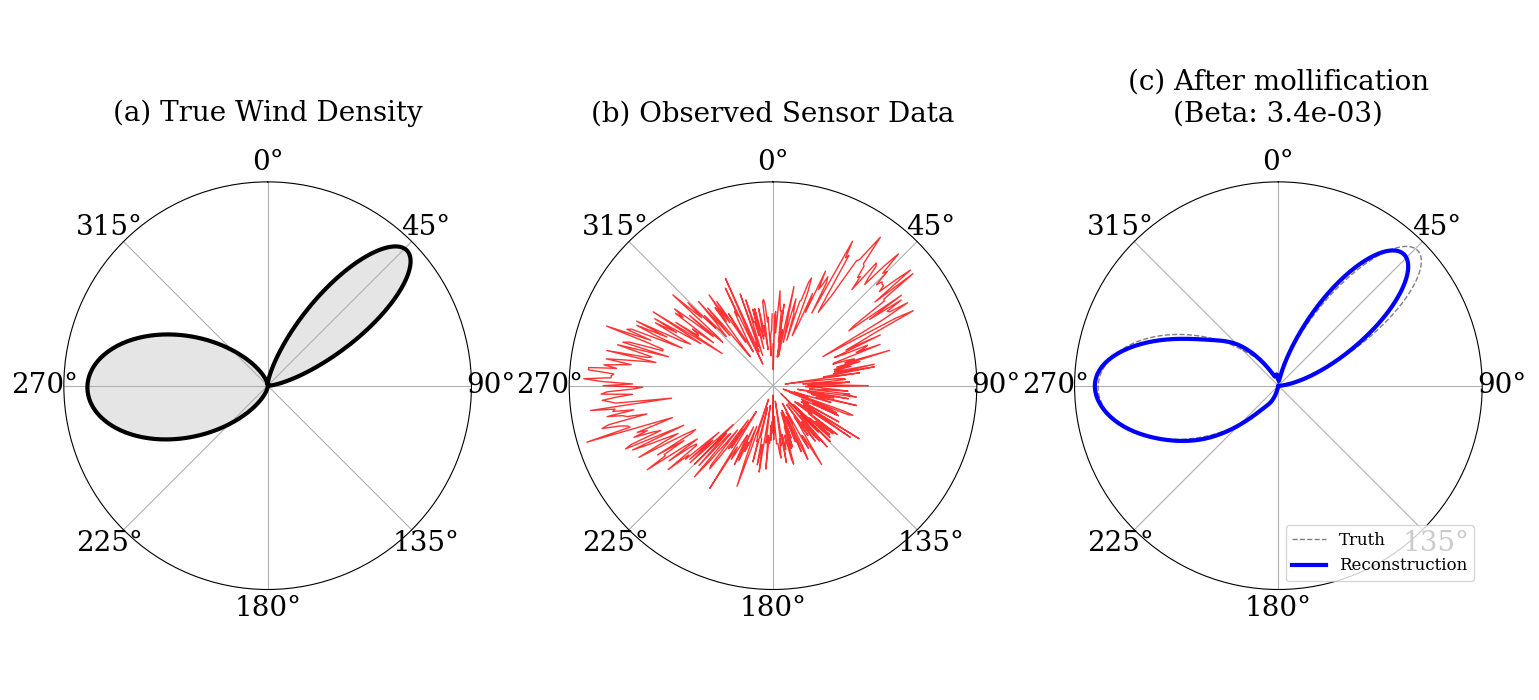}
    \caption{Reconstruction of circular wind density. (a) Ground truth bimodal distribution. (b) The observed sensor data ($\delta \approx 20\%$), where the secondary mode is obscured by noise and blur. (c) The reconstruction using the optimal $\beta$, recovering the bimodal structure.}
    \label{fig:results}
\end{figure}

\bibliographystyle{siam}
\bibliography{references}

\end{document}